\numberwithin{equation}{section}
\newtheorem{thm}{Theorem}[section]
\newtheorem{cor}[thm]{Corollary}
\theoremstyle{definition}
\newtheorem{defn}[thm]{Definition}
\newtheorem{rem}[thm]{Remark}
\newcommand{\N}{\mathbb{N}}
\newcommand{\R}{\mathbb{R}}
\newcommand{\Rb}{\overline{\R}}
\newcommand{\degc}{\mathrm{ind}}
\newcommand{\mylabel}[2]{#2\def\@currentlabel{#2}\label{#1}}
\newcommand{\hn}{$(N)$}
\newcommand{\hm}{$(M)$}
\newcommand{\hmc}{$(MC)$}
\newcommand{\vit}{$V\!I_t$}
\renewcommand{\rho}{\varrho}
\renewcommand{\theta}{\vartheta}
\newcommand{\au}[1]{\textsc{#1}}
\newcommand{\titleart}[1]{\textrm{#1}}
\newcommand{\jour}[1]{\textit{#1}}
\newcommand{\volart}[1]{\textbf{#1}}
\newcommand{\no}[1]{\textit{no.}~{#1}}
\begin{document}


\title[Quasilinear elliptic 
equations]{Quasilinear elliptic equations with natural growth and
quasilinear elliptic equations with singular drift}

\author{Marco Degiovanni \and Marco Marzocchi}
\address{Dipartimento di Matematica e Fisica\\
         Universit\`a Cattolica del Sacro Cuore\\
         Via dei Musei 41\\
         25121 Bre\-scia, Italy}
\email{marco.degiovanni@unicatt.it, marco.marzocchi@unicatt.it}
\thanks{The first author is member of the 
        Gruppo Nazionale per l'Analisi Matematica, la Probabilit\`a
				e le loro Applicazioni (GNAMPA) of the 
				Istituto Nazionale di Alta Matematica (INdAM).}

\keywords{Quasilinear elliptic equations, divergence form, 
natural growth conditions, equations with singular drift.}

\subjclass[2010]{35J66}



%
\begin{abstract}
We prove an existence result for a quasilinear elliptic
equation satisfying natural growth conditions.
As a consequence, we deduce an existence result for
a quasilinear elliptic equation containing a singular
drift.
A key tool, in the proof, is the study of an auxiliary
variational inequality playing the role of ``natural
constraint''.
\end{abstract}
\maketitle 


\section{Introduction}
Consider the quasilinear elliptic problem
\begin{equation}
\label{eq:qe}
\left\{
\begin{array}{ll}
- \mathrm{div}\left[a(x,u,\nabla u)\right]
+ b(x,u,\nabla u) = 0
&\qquad\text{in $\Omega$}\,,\\
\noalign{\medskip}
u=0
&\qquad\text{on $\partial\Omega$}\,,
\end{array}
\right.
\end{equation}
where $\Omega$ is a bounded and open subset of $\R^n$ and
\[
a:\Omega\times
\left(\R\times\R^n\right)\rightarrow \R^n\,,\qquad
b:\Omega\times
\left(\R\times\R^n\right)\rightarrow \R
\]
are two Carath\'eodory functions
satisfying the \emph{natural growth conditions}
in the sense of~\cite{giaquinta1983}.
More precisely, we assume that:
\emph{
\begin{enumerate}[align=parleft]
\item[\mylabel{hn}{\hn}]
there exist $1 < p < \infty$ and, for every $R>0$,
$\alpha^{(0)}_R\in L^1(\Omega)$,
$\alpha^{(1)}_R\in L^{p'}(\Omega)$, $\beta_R > 0$ 
and $\nu_R>0$ such that
\begin{gather*}
 |a(x,s,\xi)| \leq
\alpha^{(1)}_R(x) + \beta_R\,|\xi|^{p-1}\,,\qquad
|b(x,s,\xi)| \leq
\alpha^{(0)}_R(x) + \beta_R\,|\xi|^{p}\,,\\
\noalign{\medskip}
 a(x,s,\xi)\cdot\xi \geq
\nu_R\,|\xi|^{p} - \alpha^{(0)}_R(x)\,,
\end{gather*}
for a.e. $x\in\Omega$ and every $s\in\R$, $\xi\in\R^n$
with $|s|\leq R$;
such a $p$ is clearly unique;
\item[\mylabel{hm}{\hm}]
we have
\[
\left[a(x,s,\xi)-a(x,s,\hat{\xi})\right]
\cdot(\xi-\hat{\xi}) > 0
\]
for a.e. $x\in\Omega$ and every $s\in\R$, $\xi,\hat{\xi}\in\R^n$
with $\xi\neq\hat{\xi}$.
\end{enumerate}
}
\begin{defn}
We say that $w\in W^{1,p}_{loc}(\Omega)\cap L^\infty_{loc}(\Omega)$
is a \emph{supersolution} (resp. \emph{subsolution}) of the equation
\begin{equation}
\label{eq:int}
- \mathrm{div}\left[a(x,u,\nabla u)\right]
+ b(x,u,\nabla u) = 0
\qquad\text{in $\Omega$}\,,
\end{equation}
if 
\begin{multline*}
\int_\Omega \left[a(x,w,\nabla w)\cdot\nabla v + 
b(x,w,\nabla w) v\right]\,dx \geq 0
\qquad\text{(resp. $\leq 0$)} \\
\qquad\text{for every $v\in C^\infty_c(\Omega)$ with $v\geq 0$}\,.
\end{multline*}
\end{defn}
Let us state our main result.
\begin{thm}
\label{thm:main}
Assume there exist 
$\underline{u},\overline{u}\in 
W^{1,p}_{loc}(\Omega)\cap L^\infty(\Omega)$
and $u_0\in W^{1,p}_0(\Omega)\cap L^\infty(\Omega)$
such that $\underline{u}$ is a subsolution of~\eqref{eq:int},
$\overline{u}$ is a supersolution of~\eqref{eq:int}
and $\underline{u}\leq u_0 \leq \overline{u}$ a.e. in $\Omega$.
\par
Then there exists $u\in W^{1,p}_0(\Omega)\cap L^\infty(\Omega)$
satisfying $\underline{u}\leq u \leq \overline{u}$ a.e. in 
$\Omega$ and~\eqref{eq:qe} in a weak sense, namely
\[
\int_\Omega \left[a(x,u,\nabla u)\cdot\nabla v + 
b(x,u,\nabla u) v\right]\,dx = 0
\qquad\text{for every 
$v\in W^{1,p}_0(\Omega)\cap L^\infty(\Omega)$}\,.
\]
\end{thm}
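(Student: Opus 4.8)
\emph{Sketch of the argument.}
The plan is to realise the order constraint $\underline u\le u\le\overline u$ through an auxiliary variational inequality posed on a convex set of \emph{bounded} functions --- on which the natural growth of $b$ becomes harmless --- and then to use the sub- and supersolution hypotheses to pass from that inequality to the equation. Begin with a reduction. Set $R_0=\max\{\|\underline u\|_{L^\infty},\|\overline u\|_{L^\infty}\}$. Since any $u$ with $\underline u\le u\le\overline u$ satisfies $|u|\le R_0$ a.e., we may replace $a(x,s,\xi)$ and $b(x,s,\xi)$ by $a\bigl(x,\mathrm{med}\{-R_0,s,R_0\},\xi\bigr)$ and $b\bigl(x,\mathrm{med}\{-R_0,s,R_0\},\xi\bigr)$: this changes neither the solutions lying between $\underline u$ and $\overline u$ nor the sub/supersolution property of $\underline u$, $\overline u$, but now \hn{} and \hm{} hold with \emph{fixed} $\alpha^{(0)}\in L^1(\Omega)$, $\alpha^{(1)}\in L^{p'}(\Omega)$, $\beta>0$, $\nu>0$, for all $s\in\R$. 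Introduce
\begin{equation}
\label{eq:K}
\mathcal K=\bigl\{v\in W^{1,p}_0(\Omega):\ \underline u\le v\le\overline u\ \text{a.e.\ in }\Omega\bigr\}\,,
\end{equation}
which contains $u_0$, is convex and closed in $W^{1,p}_0(\Omega)$, and is contained in $L^\infty(\Omega)$ with $\|v\|_{L^\infty}\le R_0$ on $\mathcal K$; in particular $b(x,v,\nabla v)\,z\in L^1(\Omega)$ whenever $v\in\mathcal K$ and $z\in L^\infty(\Omega)$.

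The main step will be to establish that there exists $u\in\mathcal K$ with
\begin{equation}
\label{eq:vi}
\int_\Omega\bigl[a(x,u,\nabla u)\cdot\nabla(v-u)+b(x,u,\nabla u)(v-u)\bigr]\,dx\ge0 \qquad\text{for every }v\in\mathcal K\,.
\end{equation}
To obtain \eqref{eq:vi} I would truncate the natural-growth term: for $k\in\N$ put $b_k=\mathrm{med}\{-k,b,k\}$, so that $|b_k|\le k$, $|b_k|\le|b|$ and $b_k\to b$. By \hm{} the principal part is a Leray--Lions operator on $W^{1,p}_0(\Omega)$, while $u\mapsto b_k(x,u,\nabla u)$ is a bounded, strongly continuous map into $W^{-1,p'}(\Omega)$, so the sum is pseudomonotone and coercive; hence the variational inequality obtained from \eqref{eq:vi} by replacing $b$ with $b_k$ has a solution $u_k\in\mathcal K$ (Lions--Stampacchia / Br\'ezis theory of pseudomonotone variational inequalities, or a topological-degree argument). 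Since $u_k\in\mathcal K$ one already has $\|u_k\|_{L^\infty}\le R_0$, and it remains to let $k\to\infty$. This requires, in an intertwined way: (i) a bound on $\{u_k\}$ in $W^{1,p}_0(\Omega)$ uniform in $k$ --- and here the naive choice $v=u_0$ in the $b_k$-inequality \emph{fails}, precisely because of the natural growth, so one must use admissible test functions of the form $v=\mathrm{med}\{\underline u,\,u_k-\varepsilon\,\psi(u_k),\,\overline u\}\in\mathcal K$ with an exponential-type weight $\psi\ge0$, tuned so that the $|\nabla u_k|^p$ produced by $b_k$ is absorbed by the one from $a$ (in the spirit of Boccardo--Murat--Puel); (ii) the equi-integrability of $\{b_k(x,u_k,\nabla u_k)\}$ in $L^1(\Omega)$, obtained by a similar weighted test function; (iii) the a.e.\ convergence $\nabla u_k\to\nabla u$ along a subsequence, via the usual Leray--Lions argument, testing with truncations of $u_k-u$ and exploiting the strict monotonicity \hm{}. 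Granting these, from $u_k\rightharpoonup u$ in $W^{1,p}_0(\Omega)$ (with $u_k\to u$ in $L^p(\Omega)$ and a.e., and $u\in\mathcal K$), $\nabla u_k\to\nabla u$ a.e., and Vitali's theorem giving $b_k(x,u_k,\nabla u_k)\to b(x,u,\nabla u)$ in $L^1(\Omega)$, one passes to the limit in the $b_k$-inequality and recovers \eqref{eq:vi}.

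Finally, to pass from \eqref{eq:vi} to the equation, fix $\phi\in W^{1,p}_0(\Omega)\cap L^\infty(\Omega)$, first with compact support (the general case following by density, approximating $\phi$ by compactly supported functions with the same $L^\infty$ bound and convergent in $W^{1,p}_0(\Omega)$). For small $\varepsilon>0$, $v_\varepsilon:=\mathrm{med}\{\underline u,u+\varepsilon\phi,\overline u\}\in\mathcal K$ and
\[
v_\varepsilon-u=\varepsilon\phi-(u+\varepsilon\phi-\overline u)^+ +(\underline u-u-\varepsilon\phi)^+\,,
\]
where $0\le(u+\varepsilon\phi-\overline u)^+\le\varepsilon\phi^+$ vanishes outside $\{u+\varepsilon\phi>\overline u\}$ and $0\le(\underline u-u-\varepsilon\phi)^+\le\varepsilon\phi^-$ vanishes outside $\{u+\varepsilon\phi<\underline u\}$, both in $W^{1,p}_0(\Omega)\cap L^\infty(\Omega)$ with compact support. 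Inserting $v_\varepsilon$ in \eqref{eq:vi}, dividing by $\varepsilon$, and controlling the two extra terms by testing the supersolution inequality for $\overline u$ against $(u+\varepsilon\phi-\overline u)^+$ and the subsolution inequality for $\underline u$ against $(\underline u-u-\varepsilon\phi)^+$ --- noting that these test functions are supported on sets shrinking, as $\varepsilon\to0^+$, to $\{u=\overline u\}$, resp.\ $\{u=\underline u\}$, where $\nabla u=\nabla\overline u$, resp.\ $\nabla u=\nabla\underline u$, a.e. --- one finds that both extra contributions are nonnegative up to $o(\varepsilon)$. Letting $\varepsilon\to0^+$ then gives
\[
\int_\Omega\bigl[a(x,u,\nabla u)\cdot\nabla\phi+b(x,u,\nabla u)\phi\bigr]\,dx\ge0\,,
\]
and applying this to $-\phi$ as well yields the equality; since $|u|\le R_0$, the truncation made at the outset is immaterial, so $u\in W^{1,p}_0(\Omega)\cap L^\infty(\Omega)$ is the weak solution claimed.

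I expect the only genuinely hard part to be the uniform-in-$k$ analysis of the second step --- above all the $W^{1,p}_0(\Omega)$ bound and the $L^1$-equi-integrability of $b_k(x,u_k,\nabla u_k)$, which are unreachable with plain test functions and require order-truncated, exponentially weighted test functions compatible with $\mathcal K$; once these are available, the a.e.\ convergence of the gradients and the passage to the limit in the lower-order term are comparatively standard. A secondary, more technical point is to justify testing the sub- and supersolution inequalities --- stated only against $C^\infty_c(\Omega)$ --- against the truncations used in the last step, an approximation argument that must take into account that $\underline u$, $\overline u$ belong only to $W^{1,p}_{loc}(\Omega)$.
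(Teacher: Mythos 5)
Your reduction by truncating $a$, $b$ in $s$ and your overall plan---solve a variational inequality on the order interval, then use the sub/supersolutions to pass to the equation---are the same as the paper's, although for the existence step you propose to truncate the lower-order term and take a Boccardo--Murat--Puel-type limit, whereas the paper instead quotes a degree theory for such variational inequalities. The genuine gap is in your last step. After inserting $v_\varepsilon=\mathrm{med}\{\underline u,u+\varepsilon\phi,\overline u\}$, using the supersolution inequality for $\overline u$ against $(u+\varepsilon\phi-\overline u)^+$, and exploiting monotonicity of $a(x,s,\cdot)$, the contribution from the upper obstacle reduces, up to quantities that do tend to $0$, to
\[
\frac{1}{\varepsilon}\int_{\{u+\varepsilon\phi>\overline u\}}\bigl[a(x,u,\nabla\overline u)-a(x,\overline u,\nabla\overline u)\bigr]\cdot\nabla(u+\varepsilon\phi-\overline u)\,dx\,.
\]
You assert this is $o(1)$ because $\{u+\varepsilon\phi>\overline u\}$ shrinks to $\{u=\overline u\}$, where $\nabla u=\nabla\overline u$ a.e.; but in the presence of the $1/\varepsilon$ factor one needs a \emph{rate}: a quantitative modulus of continuity of $a(x,\cdot,\xi)$ near $\overline u(x)$, not just pointwise continuity supplied by the Carath\'eodory hypothesis. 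Without such control, the above term need not vanish, and the truncation of $a$ in $s$ made at the outset does not help here, since on $\{u+\varepsilon\phi>\overline u\}$ one has $u<\overline u$, precisely the range where the $s$-dependence of the truncated $a$ is still active.

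This is exactly the obstruction the paper isolates as hypothesis~$(iii)$ of Theorem~\ref{thm:eqvareq}, which does not follow from \hn--\hm\ alone. The paper circumvents it with an extra idea your sketch misses: having reduced to $a(x,s,\xi)$ constant in $s$ for $s\ge\overline u(x)$ (and for $s\le\underline u(x)$), it works with the \emph{inflated} obstacles $\overline u_t=\overline u+t$, $\underline u_t=\underline u-t$ for $t>0$. For these, hypothesis~$(iii)$ holds trivially (with $\gamma_C=0$), since $a(x,\cdot,\xi)$ is literally constant on a $t$-neighbourhood of the inflated obstacles; hence the solution $u_m$ of the inequality on $K_{1/m}$ solves the equation. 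The paper then invokes the strong compactness of the whole family of solutions (Theorem~\ref{thm:strongcompactness}, established by degree-theoretic means) to pass to the limit $m\to\infty$ and recover the equation for a limit point $u\in K_0$. Unless you supply either condition~$(iii)$ as an extra hypothesis or an analogue of this parametric inflation, the passage from your inequality \eqref{eq:vi} to the equation does not close.
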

\begin{rem}
The previous result should be compared 
with~\cite[Theorem]{hess1978} 
and~\cite[Th\'eo\-r\`e\-me~2.1]{boccardo_murat_puel1984}.
The main feature is that in~\cite{hess1978} a growth
condition of the form
\[
|b(x,s,\xi)| \leq
\alpha^{(0)}_R(x) + \beta_R\,|\xi|^{p-\varepsilon}
\]
with $\alpha^{(0)}_R \in L^1(\Omega)$ 
and $\varepsilon>0$ is required, so that the natural
growth of order $p$ in $\xi$ is not allowed.
\par
On the other hand, in~\cite{boccardo_murat_puel1984}
it is assumed that
\[
|b(x,s,\xi)| \leq
\beta_R\,\left(1 + |\xi|^{p}\right)
\]
and the term $\alpha^{(0)}_R \in L^1(\Omega)$
is not permitted (see also the remarks 
in~\cite{boccardo2018}).
\par
Here we take advantage of the framework
of~\cite{almi_degiovanni2013, degiovanni_pluda2017} 
to allow the condition
\[
|b(x,s,\xi)| \leq
\alpha^{(0)}_R(x) + \beta_R\,|\xi|^{p}\,,
\]
which seems to be the most general to guarantee
that $b(x,u,\nabla u)\in L^1(\Omega)$ whenever
$u\in W^{1,p}_0(\Omega)\cap L^\infty(\Omega)$.
\par
Just this level of generality will allow us to treat,
as a particular case, a problem with singular
drift, as we will see in the next result.
\end{rem}
\begin{rem}
A more general equation of the form
\[
- \mathrm{div}\left[a(x,u,\nabla u)\right]
+ b(x,u,\nabla u) = f_0 - \mathrm{div}\,f_1\,,
\]
with $f_0\in L^1(\Omega)$ and 
$f_1\in L^{p'}(\Omega;\R^n)$, can be easily reduced
to our case by setting
\begin{alignat*}{3}
&\check{a}(x,s,\xi) &&= a(x,s,\xi) - f_1(x)\,,\\
&\check{b}(x,s,\xi) &&= b(x,s,\xi) - f_0(x)\,.
\end{alignat*}
Of course, the key point is the existence of
\emph{bounded} super/subsolutions.
\end{rem}
\begin{cor}
\label{cor:main}
Assume that $a$ satisfies the further condition:
\[
a(x,s,0) = 0
\qquad\text{for a.e. $x\in\Omega$ and every $s\in\R$}\,.
\]
Let $0 < q \leq p$, $r>0$ and let 
$b_1 \in L^{\frac{p}{p-q}}(\Omega;\R^n)$\footnote{We
mean $b_1 \in L^\infty(\Omega;\R^n)$ if $q=p$.}
and $b_0, f \in L^1(\Omega)$ be such that:
\begin{equation}
\label{eq:Q}
\text{there exists $Q\geq 0$ satisfying
$|f(x)| \leq Q \, b_0(x)$
for a.e. $x\in\Omega$}\,.
\end{equation}
\indent
Then there exists $u\in W^{1,p}_0(\Omega)\cap L^\infty(\Omega)$
satisfying 
\[
- Q^{1/r} \leq u \leq Q^{1/r}
\qquad\text{a.e. in $\Omega$}
\]
and
\begin{multline*}
\int_\Omega \left[a(x,u,\nabla u)\cdot\nabla v + 
b_1\cdot\left(|\nabla u|^{q-1}\nabla u\right) v
+ b_0 |u|^{r-1}u v\right]\,dx = \int_\Omega v f\,dx \\
\qquad\text{for every 
$v\in W^{1,p}_0(\Omega)\cap L^\infty(\Omega)$}\,.
\end{multline*}
\end{cor}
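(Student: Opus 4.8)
The plan is to reduce the statement to Theorem~\ref{thm:main}, applied to the (unchanged) function $a$ and to the Carath\'eodory function
\[
b(x,s,\xi):=b_1(x)\cdot\bigl(|\xi|^{q-1}\xi\bigr)+b_0(x)\,|s|^{r-1}s-f(x)\,.
\]
First I would verify that the pair $(a,b)$ satisfies \hn. The growth of $|a|$ and the coercivity of $a(x,s,\xi)\cdot\xi$ are part of the standing hypotheses, so only the bound on $b$ needs checking: for $|s|\le R$ one has $|b(x,s,\xi)|\le|b_1(x)|\,|\xi|^q+R^r|b_0(x)|+|f(x)|$, and Young's inequality with the conjugate exponents $p/q$ and $p/(p-q)$ gives, for every $\delta>0$,
\[
|b_1(x)|\,|\xi|^q\le\frac{q\,\delta^{p/q}}{p}\,|\xi|^p+\frac{(p-q)\,\delta^{-p/(p-q)}}{p}\,|b_1(x)|^{p/(p-q)}\,.
\]
Since $b_1\in L^{p/(p-q)}(\Omega;\R^n)$ and $b_0,f\in L^1(\Omega)$, the function obtained by adding $\tfrac{(p-q)\delta^{-p/(p-q)}}{p}|b_1|^{p/(p-q)}+R^r|b_0|+|f|$ to the $L^1$ remainder already available for $a$ serves as $\alpha^{(0)}_R$, and any $\delta>0$ produces an admissible $\beta_R$ (when $q=p$ one simply takes $\beta_R=\|b_1\|_{L^\infty}$). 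Condition \hm is inherited verbatim from $a$.

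Next I would supply the ordered sub/supersolutions. I claim the constants $\underline u\equiv-Q^{1/r}$ and $\overline u\equiv Q^{1/r}$ are, respectively, a subsolution and a supersolution of $-\dvg[a(x,u,\nabla u)]+b(x,u,\nabla u)=0$, while $u_0:=0\in W^{1,p}_0(\Omega)\cap L^\infty(\Omega)$ clearly satisfies $\underline u\le u_0\le\overline u$ because $Q\ge0$. Indeed $\nabla\overline u=0$, so $a(x,\overline u,\nabla\overline u)=a(x,\overline u,0)=0$ by the extra assumption on $a$ and $|\nabla\overline u|^{q-1}\nabla\overline u=0$; hence for $v\in C^\infty_c(\Omega)$ with $v\ge0$ the relevant integral reduces to $\int_\Omega\bigl(Q\,b_0-f\bigr)v\,dx$, which is $\ge0$ since $f\le|f|\le Q\,b_0$ by~\eqref{eq:Q}. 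The computation for $\underline u$ is symmetric and reduces to $\int_\Omega\bigl(-Q\,b_0-f\bigr)v\,dx\le0$, using $-f\le|f|\le Q\,b_0$.

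Finally, Theorem~\ref{thm:main} provides $u\in W^{1,p}_0(\Omega)\cap L^\infty(\Omega)$ with $-Q^{1/r}\le u\le Q^{1/r}$ a.e.\ in $\Omega$ and $\int_\Omega[a(x,u,\nabla u)\cdot\nabla v+b(x,u,\nabla u)v]\,dx=0$ for every $v\in W^{1,p}_0(\Omega)\cap L^\infty(\Omega)$; substituting the definition of $b$ and moving the $f$-term to the right-hand side yields exactly the asserted identity (the integrability of all terms, e.g.\ $|b_1|\,|\nabla u|^q\,|v|\le\|v\|_{L^\infty}|b_1|\,|\nabla u|^q$ via H\"older, being automatic once \hn holds). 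The only genuinely substantive point — and the reason the extra hypotheses are imposed — is that the drift $b_1\cdot(|\nabla u|^{q-1}\nabla u)$ with $b_1$ merely in $L^{p/(p-q)}$ rather than $L^\infty$ fits the framework precisely because Theorem~\ref{thm:main} tolerates the $L^1$ remainder $\alpha^{(0)}_R$ in the growth of $b$, and because the homogeneity $a(x,s,0)=0$ together with~\eqref{eq:Q} is exactly what makes the constants $\pm Q^{1/r}$ into bounded sub/supersolutions with the required ordering.
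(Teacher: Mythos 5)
Your proof is correct and follows essentially the same route as the paper: define the same auxiliary $b$, verify \hn\ via Young's inequality, take the constants $\pm Q^{1/r}$ as sub/supersolution (with $u_0=0$), and invoke Theorem~\ref{thm:main}. The only cosmetic difference is that the paper dispatches the case $Q=0$ separately (there $f=0$ and $u=0$ solves trivially), while your argument absorbs it as a degenerate instance of the same reasoning.
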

\begin{rem}
Assumption~\eqref{eq:Q} is in particular satisfied if
$f\in L^\infty(\Omega)$ and $b_0(x) \geq \underline{b}>0$.
The previous corollary extends the results 
of~\cite{boccardo2018}, devoted to cases in which
the principal part of the equation is linear 
(see also, when the equation is fully linear, 
the paper~\cite{kim_kim2015}).
The technique of~\cite{boccardo2018} is based on a duality 
approach which seems not to be easily adaptable when the 
principal part of the equation is not linear.
\end{rem}
Concerning equations where condition~\eqref{eq:Q} is assumed,
Theorem~\ref{thm:main} allows us also to prove
the next corollary, which slightly generalizes
some results of~\cite{arcoya_boccardo2015} obtained by a
different technique.
\begin{cor}
\label{cor:int}
Assume that $a$ and $b$ satisfy the further condition:
\[
a(x,s,0) = 0\,,\qquad b(x,s,0) = 0
\qquad\text{for a.e. $x\in\Omega$ and every $s\in\R$}\,.
\]
Let $b_0, f \in L^1(\Omega)$ be such 
that~\eqref{eq:Q} holds with $Q>0$ and let 
$g:\R\rightarrow\R$ be a continuous function such that
\[
\lim_{s\to -\infty}\,g(s) = -\infty\,,\qquad
\lim_{s\to +\infty}\,g(s) = +\infty\,.
\]
\indent
Then there exists $u\in W^{1,p}_0(\Omega)\cap L^\infty(\Omega)$
satisfying 
\begin{multline*}
\int_\Omega \left[a(x,u,\nabla u)\cdot\nabla v + 
b(x,u,\nabla u)v + b_0 g(u) v\right]\,dx 
= \int_\Omega v f\,dx \\
\qquad\text{for every 
$v\in W^{1,p}_0(\Omega)\cap L^\infty(\Omega)$}
\end{multline*}
and $\underline A \leq u \leq \overline A$ a.e. in $\Omega$,
provided that $\underline A \leq 0 \leq \overline A$ and
$g(\underline A)\leq -Q$, $g(\overline A)\geq Q$.
\end{cor}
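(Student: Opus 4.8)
The plan is to deduce Corollary~\ref{cor:int} directly from Theorem~\ref{thm:main} by absorbing the extra terms $b_0\,g(u)$ and $-f$ into the lower-order coefficient. Precisely, I would set
\[
\widehat{a}(x,s,\xi) = a(x,s,\xi)\,,\qquad
\widehat{b}(x,s,\xi) = b(x,s,\xi) + b_0(x)\,g(s) - f(x)\,,
\]
and check that the pair $(\widehat{a},\widehat{b})$ still satisfies \hn\ and \hm. Condition \hm\ is immediate since $\widehat{a}=a$. For \hn, the estimates and the coercivity inequality involving $a$ carry over verbatim; for the modified zero-order term, continuity of $g$ gives $M_R := \max_{|s|\le R}|g(s)| < \infty$, and, noting that~\eqref{eq:Q} with $Q>0$ forces $b_0\ge 0$ a.e., one obtains
\[
|\widehat{b}(x,s,\xi)| \le \alpha^{(0)}_R(x) + (M_R+Q)\,b_0(x) + \beta_R\,|\xi|^p
\qquad\text{whenever }|s|\le R\,.
\]
It then suffices to replace $\alpha^{(0)}_R$ by $\alpha^{(0)}_R + (M_R+Q)\,b_0 \in L^1(\Omega)$; this larger function also fits the coercivity inequality, so $-\dvg[\widehat{a}(x,u,\nabla u)] + \widehat{b}(x,u,\nabla u)$ falls within the scope of Theorem~\ref{thm:main}.

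Next I would produce bounded sub/supersolutions. Given $\underline{A}\le 0\le\overline{A}$ with $g(\underline{A})\le -Q$ and $g(\overline{A})\ge Q$ (such constants exist since $g(s)\to\pm\infty$), I claim the constant functions $\underline{A}$ and $\overline{A}$ are respectively a subsolution and a supersolution of $-\dvg[\widehat{a}(x,u,\nabla u)] + \widehat{b}(x,u,\nabla u)=0$. Indeed, for the constant $\overline{A}$ we have $\nabla\overline{A}=0$, hence $\widehat{a}(x,\overline{A},0)=a(x,\overline{A},0)=0$ and $\widehat{b}(x,\overline{A},0)=b(x,\overline{A},0)+b_0(x)g(\overline{A})-f(x)=b_0(x)g(\overline{A})-f(x)$; since $f(x)\le Q\,b_0(x)\le g(\overline{A})\,b_0(x)$ a.e. (using $b_0\ge 0$ and $g(\overline{A})\ge Q$), this quantity is $\ge 0$ a.e., so $\int_\Omega[\widehat{a}(x,\overline{A},0)\cdot\nabla v + \widehat{b}(x,\overline{A},0)v]\,dx\ge 0$ for every $v\in C^\infty_c(\Omega)$ with $v\ge 0$. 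The argument for $\underline{A}$ is symmetric, using $f(x)\ge -Q\,b_0(x)\ge g(\underline{A})\,b_0(x)$. Finally $u_0=0\in W^{1,p}_0(\Omega)\cap L^\infty(\Omega)$ satisfies $\underline{A}\le u_0\le\overline{A}$.

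With these ingredients Theorem~\ref{thm:main} applies and yields $u\in W^{1,p}_0(\Omega)\cap L^\infty(\Omega)$ with $\underline{A}\le u\le\overline{A}$ a.e. in $\Omega$ and $\int_\Omega[\widehat{a}(x,u,\nabla u)\cdot\nabla v + \widehat{b}(x,u,\nabla u)v]\,dx = 0$ for every $v\in W^{1,p}_0(\Omega)\cap L^\infty(\Omega)$; rewriting $\widehat{a},\widehat{b}$ in terms of $a,b,b_0,g,f$ gives exactly the claimed identity. There is no genuine analytic difficulty here beyond Theorem~\ref{thm:main} itself: the whole content is the reduction, so the only points requiring care are the verification that the single $L^1$-function $(M_R+Q)\,b_0$ absorbs both $b_0\,g(s)$ and $f$ uniformly for $|s|\le R$, and the sign bookkeeping in the two sub/supersolution inequalities.
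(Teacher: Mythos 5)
Your proof is correct and follows the same route as the paper: you define the modified lower-order term $\widehat{b}(x,s,\xi) = b(x,s,\xi) + b_0(x)\,g(s) - f(x)$ (the paper's $\check{b}$), take the constants $\underline A$ and $\overline A$ as bounded sub/supersolution with $u_0=0$, and invoke Theorem~\ref{thm:main}. The only difference is that you spell out the verification of \hn\ (using continuity of $g$ and $b_0\ge 0$ from~\eqref{eq:Q} with $Q>0$), which the paper dismisses as ``easily seen.''
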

The proof of Theorem~\ref{thm:main} is based on the study 
of an auxiliary variational inequality, which plays the role 
of ``natural constraint'', in the sense that the solutions
of the variational inequality are automatically solutions
of the equation.
This kind of device appears many times in the 
literature and goes back, to our knowledge,
to~\cite{deuel_hess1976}.
A variant can be found in~\cite[Theorem~3.3]{marino1989} 
and~\cite[Theorem~2.3]{passaseo1989}
(see also~\cite{degiovanni_pluda2017, saccon2014}).
%


\section{Parametric quasilinear elliptic variational inequalities
with natural growth conditions}
\label{sect:qevin}
Throughout this section, we still consider two Carath\'eodory
functions $a$, $b$ satisfying~\ref{hn} and~\ref{hm}
and, moreover, a $p$-quasi upper semicontinuous 
function $\underline{u}:\Omega\rightarrow\Rb$
and a $p$-quasi lower semicontinuous 
function $\overline{u}:\Omega\rightarrow\Rb$.
It is well known that every $u\in W^{1,p}_0(\Omega)$ admits a 
Borel and $p$-quasi continuous representative $\tilde{u}$,
defined up to a set of null $p$-capacity, which we still 
denote by $u$ (see e.g.~\cite{dalmaso1983}).
\par
For every $t\in[0,1]$, we set
\begin{alignat*}{3}
&\underline{u}_t &&= \underline{u}-t\,,\\
&\overline{u}_t &&= \overline{u}+t\,,\\
&K_t&&=\left\{u\in W^{1,p}_0(\Omega)\cap L^{\infty}(\Omega):
\,\,\text{$\underline{u}_t\leq u \leq\overline{u}_t$
\,\,$p$-q.e. in $\Omega$}\right\}\,.
\end{alignat*}
We aim to consider the solutions $u$ of the parametric 
variational inequality
\begin{equation}
\label{eq:qevit}
\tag{\vit}
\begin{cases}
u\in K_t\,,\\
\noalign{\medskip}
\displaystyle{
\int_{\Omega}
\bigl[a(x,u,\nabla u)\cdot\nabla (v-u) +
b(x,u,\nabla u)\,(v-u)\bigr]\,dx \geq 0}\\
\noalign{\medskip}
\qquad\qquad\qquad\qquad\qquad\qquad\qquad\qquad\qquad\qquad
\text{for every $v\in K_t$}\,.
\end{cases}
\end{equation}
\begin{thm} 
\label{thm:strongcompactness}
Assume that $\underline{u}$, $\overline{u}$ are bounded and that
there exists $u_0\in W^{1,p}_0(\Omega)\cap L^\infty(\Omega)$
such that $\underline{u} \leq u_0 \leq \overline{u}$ 
$p$-q.e. in $\Omega$.
\par
Then the following facts hold:
\begin{enumerate}
\item[$(a)$] 
for every $t\in[0,1]$, there exists a solution $u$
of~\eqref{eq:qevit};
\item[$(b)$] 
the set
\[
\left\{ (u,t)\in
\left(W^{1,p}_0(\Omega)\cap L^{\infty}(\Omega)\right) 
\times [0,1]: \,\,
\text{$u$ is a solution of~\eqref{eq:qevit}} \right\}
\]
is strongly compact in $W^{1,p}_0(\Omega)\times[0,1]$.
\end{enumerate}
\end{thm}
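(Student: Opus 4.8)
The plan is to establish existence and compactness simultaneously, since the truncation argument that makes the natural‑growth term manageable is common to both and the hardest estimates are shared.

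\medskip

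\noindent\emph{Step 1: a priori $L^\infty$ bound and the role of the constraint.}
First I would observe that, although $b$ grows like $|\xi|^p$, any $u\in K_t$ is automatically bounded, with an $L^\infty$ bound depending only on $\|\underline u\|_\infty$, $\|\overline u\|_\infty$ and $t\le 1$; set $R=R(t)$ for this bound. Hence along $K_t$ the coefficients $a(x,u,\nabla u)$, $b(x,u,\nabla u)$ are controlled by $\alpha^{(1)}_R+\beta_R|\nabla u|^{p-1}$ and $\alpha^{(0)}_R+\beta_R|\nabla u|^p$ respectively, and the coercivity $a(x,u,\nabla u)\cdot\nabla u\ge\nu_R|\nabla u|^p-\alpha^{(0)}_R$ holds. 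This reduces the problem to a quasilinear variational inequality with coefficients of "critical" growth, but on a set where the unknown is pointwise bounded, which is exactly the classical Boccardo–Murat–Puel setting once the $L^\infty$ constraint is in force.

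\medskip

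\noindent\emph{Step 2: existence of a solution of \eqref{eq:qevit} for fixed $t$.}
I would run a penalization/regularization scheme: truncate the lower‑order term, e.g. replace $b$ by $b_k(x,s,\xi)=b(x,s,\xi)/(1+\frac1k|b(x,s,\xi)|)$ (or by $\min\{|b|,k\}\,\mathrm{sgn}\,b$), so that $b_k$ is bounded and the operator $u\mapsto-\dvg a(x,u,\nabla u)+b_k(x,u,\nabla u)$ on the closed convex set $K_t$ is pseudomonotone and coercive on $K_t$ (using Step 1 coercivity, with $\alpha^{(0)}_R$ absorbed against any fixed element $u_0\in K_t$, which exists by hypothesis). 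Standard theory for variational inequalities with pseudomonotone operators (Lions, Brézis) gives a solution $u_k\in K_t$. The uniform bound $\|\nabla u_k\|_p\le C$ follows by testing with $v=u_0$ and using coercivity; the $L^\infty$ bound is free from $K_t$. The delicate point, and the main obstacle of the whole theorem, is to pass to the limit $k\to\infty$: one needs $\nabla u_k\to\nabla u$ a.e. (equivalently strongly in $L^p$) in order to remove the truncation and identify the limit, and the test function $v=u$ is \emph{not} admissible in the inequality solved by $u_k$. Here I would use the now‑standard technique: test with $v=u_k-\varphi_\lambda(u_k-u)$ where $\varphi_\lambda(s)=s e^{\lambda s^2}$ with $\lambda$ chosen large relative to $\beta_R^2/\nu_R^2$, so that the exponential absorbs the bad term $\beta_R|\nabla u_k|^p|u_k-u|$ against the good term coming from monotonicity of $a$; this is legitimate because $u_k-u$ is bounded (both lie in the bounded set $K_t$, up to the fixed shift), hence $v\in K_t$. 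Letting $\lambda$‑adjusted inequality run, together with the monotonicity hypothesis \ref{hm} and the classical Leray–Lions / Boccardo–Murat argument, yields $\int_\Omega[a(x,u_k,\nabla u_k)-a(x,u_k,\nabla u)]\cdot(\nabla u_k-\nabla u)\to0$, whence $\nabla u_k\to\nabla u$ a.e.; then Vitali and the growth bounds let me pass to the limit in every term, giving that $u$ solves \eqref{eq:qevit}. This proves $(a)$.

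\medskip

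\noindent\emph{Step 3: compactness of the solution set.}
For $(b)$, take any sequence $(u_j,t_j)$ of solutions with $t_j\to t$. Since $t_j\le1$, all $u_j$ lie in a common bounded subset of $L^\infty$ (bound depending only on $\|\underline u\|_\infty$, $\|\overline u\|_\infty$ and $1$), so $R$ can be chosen once and for all; testing each $VI_{t_j}$ with a fixed $u_0\in K_0\subseteq K_{t_j}$ and using coercivity gives $\|\nabla u_j\|_p\le C$. Pass to a subsequence: $u_j\rightharpoonup u$ in $W^{1,p}_0(\Omega)$, $u_j\to u$ in $L^p$ and a.e. One checks $u\in K_t$ using $p$‑quasi semicontinuity of $\underline u,\overline u$ and the fact that $t_j\to t$ (the $p$‑q.e. inequalities pass to the limit along a subsequence that converges $p$‑q.e., via Mazur/capacity arguments; this is where the $p$‑quasi (semi)continuous representatives matter). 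The key point is again strong convergence of the gradients: given $w\in K_t$, one needs an admissible test element in $K_{t_j}$ close to $w$ — e.g. the truncation $w_j=\max\{\underline u_{t_j},\min\{\overline u_{t_j},w\}\}\in K_{t_j}$, which converges to $w$ in $W^{1,p}_0$ as $t_j\to t$ — and in particular $v=w_j$ with $w=u$ is \emph{almost} admissible; combined with the exponential‑test trick of Step 2 applied to $\nabla u_j-\nabla u$, this yields $\int_\Omega[a(x,u_j,\nabla u_j)-a(x,u_j,\nabla u)]\cdot(\nabla u_j-\nabla u)\to0$, hence $\nabla u_j\to\nabla u$ a.e. and, by Vitali with the growth bounds, strongly in $L^p$. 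Finally, passing to the limit in the inequality $\int_\Omega[a(x,u_j,\nabla u_j)\cdot\nabla(w_j-u_j)+b(x,u_j,\nabla u_j)(w_j-u_j)]\ge0$ for arbitrary $w\in K_t$ shows $u$ solves \eqref{eq:qevit} with parameter $t$, and the strong convergence $u_j\to u$ in $W^{1,p}_0$ together with $t_j\to t$ gives precisely the asserted strong compactness. I expect the genuinely hard part to be the uniform control needed in the exponential test when $t$ varies (one must ensure $u_j-u$ stays bounded so that the test function lies in $K_{t_j}$), and the careful handling of the $p$‑capacity constraints in the limit — the rest is a standard, if laborious, Leray–Lions–type passage to the limit.
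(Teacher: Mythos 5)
The paper's proof of this theorem is essentially a citation: existence ($a$) follows from the degree-theoretic results of Degiovanni--Pluda (their Theorems~5.10 and~5.6), which give $\degc(Z^{tot}_t)=1$ and hence nonemptiness, and the compactness ($b$) follows from their Theorem~5.8, whose only hypothesis left to verify here is the continuity of $t\mapsto K_t$ in the Mosco sense; the paper then checks this by an elementary argument involving the translated constraints. Your proposal instead tries to rebuild the whole existence-and-compactness machinery from scratch via penalization and a Boccardo--Murat--Puel passage to the limit, which is a genuinely different (and far more laborious) route.

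The central step of your argument, however, has a real gap. You take as test function in \eqref{eq:qevit} the element $v=u_k-\varphi_\lambda(u_k-u)$ with $\varphi_\lambda(s)=s\,e^{\lambda s^2}$ and justify its admissibility by saying that $u_k-u$ is bounded, ``hence $v\in K_t$''. This does not follow. Admissibility requires $\underline u_t\leq v\leq\overline u_t$ $p$-q.e., and $u_k-\varphi_\lambda(u_k-u)$ need not respect either obstacle: at points where $u_k$ is close to $\overline u_t$ and $u_k<u$ we have $\varphi_\lambda(u_k-u)<0$, so $v>u_k$ can overshoot $\overline u_t$; symmetrically, near $\underline u_t$ with $u_k>u$ the test function can drop below the lower obstacle. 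Boundedness of $u_k-u$ gives no control on the distance of $u_k$ to the obstacles, which is exactly what matters. The bilateral constraint is precisely what prevents the exponential trick from being applied verbatim, and this is the hard kernel of the theorem --- the reason the paper delegates it to an already-developed degree theory rather than redo it. To make your Step~2 (and the analogous estimate in Step~3, where you need a test element in $K_{t_j}$) rigorous one would have to replace $\varphi_\lambda(u_k-u)$ by an admissible perturbation, e.g.\ $v=u_k-\varepsilon\, T_m\bigl(\varphi_\lambda(u_k-u)\bigr)$ with $\varepsilon$ small, or run the Mokrane/Boccardo--Murat unilateral scheme separately on $(u_k-u)^+$ and $(u_k-u)^-$ using the upper and lower obstacles respectively, and then track the resulting error terms. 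As written, the key convergence $\int_\Omega[a(x,u_k,\nabla u_k)-a(x,u_k,\nabla u)]\cdot\nabla(u_k-u)\,dx\to 0$ is not established, so neither the identification of the limit in ($a$) nor the strong $W^{1,p}_0$ compactness in ($b$) is actually proved.

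A secondary point: in your Step~3 the verification that the limit $u$ lies in $K_t$ and that $K_{t_j}\to K_t$ in a sense strong enough to pass to the limit in the inequality is exactly the Mosco-convergence check that occupies the entire body of the paper's proof; you gesture at it (``Mazur/capacity arguments'') but do not carry it out, in particular the recovery-sequence half (given $w\in K_t$, produce $w_j\in K_{t_j}$ with $w_j\to w$ strongly). Your truncation $w_j=\max\{\underline u_{t_j},\min\{\overline u_{t_j},w\}\}$ is the right idea but it must be shown to converge strongly in $W^{1,p}_0(\Omega)$ and not merely in $L^p$; the paper does this using the fixed $u_0\in K_0$ in a slightly different way.
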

\begin{proof}
We aim to apply the results of~\cite{degiovanni_pluda2017}.
Let us denote by $Z^{tot}_t$ the set of solutions 
of~\eqref{eq:qevit}.
Since $K_t\neq\emptyset$, 
from~\cite[Theorem~5.10]{degiovanni_pluda2017}
we infer that $\degc(Z^{tot}_t) = 1$.
Then~\cite[Theorem~5.6]{degiovanni_pluda2017} implies that
$Z^{tot}_t\neq\emptyset$ and assertion~$(a)$ follows..
\par
To prove assertion~$(b)$, we want to apply $(a)$ 
of~\cite[Theorem $5.8$]{degiovanni_pluda2017}. 
The only assumption we need to check is the\ continuity
of $\left\{t\mapsto K_t\right\}$ with respect to the Mosco
convergence~\hmc.
\par
Let $t_k\to t$ in $[0,1]$ and $(v_k)$ be a sequence weakly 
convergent to $v$ in $W^{1,p}_0(\Omega)$ with $v_k\in K_{t_k}$.
Up to a subsequence, we may assume that $(t_k)$ is monotone.
\par
If  $(t_k)$ is increasing, we have $v_{t_k} \in K_t$ for every 
$k\in\N$. 
Since $K_t$ is weakly closed, it follows that $v\in K_t$.
If  $(t_k)$ is decreasing, for every fixed $h\in\N$ it is 
$v_k\in K_{t_h}$ for every $k\geq h$, whence $v\in K_{t_h}$, 
namely
\[
\underline{u} - t_h \leq v \leq \overline u + t_h
\qquad\text{$p$-q.e. in $\Omega$}\,.
\] 
Since $t_h$ is converging to $t$, it follows that
\[
\underline{u} - t \leq v \leq \overline u + t
\qquad\text{$p$-q.e. in $\Omega$}\,,
\]
namely $v \in K_t$.
\par
Let now $t_k\to t$ in $[0,1]$ and $v\in K_t$. 
As before, up to a subsequence we may assume that $(t_k)$ 
is monotone.
\par
If $(t_k)$ is decreasing, we set $v_k=v$ for every $k\in\N$
and of course $(v_k)$ is strongly convergent to $v$
in $W^{1,p}_0(\Omega)$ with $v_k\in K_{t_k}$.
If $(t_k)$ is increasing, we set
\[
v_k = u_0 + (v - u_0 - t + t_k)^+ - (v - u_0 + t - t_k)^- \,.
\]
Since it is easily seen that $(v_k)$ converges to $v$ 
strongly in $W^{1,p}_0(\Omega)$, we have only to check 
that $v_k \in K_{t_k}$ eventually as $k\to\infty$.
If
\[
v(x) - u_0(x) - t + t_k > 0\,,
\]
we have
\[
v(x) - u_0(x) + t - t_k \geq
v(x) - u_0(x) - t + t_k > 0\,,
\]
whence
\[
v_k(x) = v(x) - t + t_k \geq u_0(x)\,.
\]
It follows
\[
\underline{u}(x) - t_k \leq u_0(x) 
\leq v_k(x) = v(x) - t + t_k
\leq \overline{u}(x) + t_k\,.
\]
If 
\[
v(x) - u_0(x) + t - t_k < 0\,,
\]
we infer in a similar way that
\[
\underline{u}(x) - t_k \leq
v_k(x) \leq \overline{u}(x) + t_k\,.
\]
Otherwise $v_k(x) = u_0(x)$,
which yields the same conclusion.
Therefore $v_k \in K_{t_k}$ and the proof is complete.
\end{proof}
%


\section{Solutions of equations versus solutions of variational
inequalities}
\label{sect:eqvareq}
Throughout this section, $\widehat{\Omega}$ will denote an
open subset of $\R^n$ and
\[
\hat{a}:\widehat{\Omega}\times
\left(\R\times\R^n\right)\rightarrow \R^n\,,\qquad
\hat{b}:\widehat{\Omega}\times
\left(\R\times\R^n\right)\rightarrow \R
\]
two Carath\'eodory functions such that:
\emph{
\begin{enumerate}[align=parleft]
\item[$(i)$]
there exist $1 < p < \infty$ and, for every compact
subset $C$ of $\widehat{\Omega}$ and every $R>0$,
$\alpha^{(0)}_{C,R}\in L^1(C)$,
$\alpha^{(1)}_{C,R}\in L^{p'}(C)$ and 
$\beta_{C,R} \geq 0$ such that
\begin{align*}
& |\hat{a}(x,s,\xi)| \leq
\alpha^{(1)}_{C,R}(x) + \beta_{C,R}\,|\xi|^{p-1}\,,\\
\noalign{\medskip}
& |\hat{b}(x,s,\xi)| \leq
\alpha^{(0)}_{C,R}(x) + \beta_{C,R}\,|\xi|^{p}\,,
\end{align*}
for a.e. $x\in C$ and every $s\in\R$, $\xi\in\R^n$
with $|s|\leq R$;
\item[$(ii)$]
we have
\[
\left[\hat{a}(x,s,\xi)-\hat{a}(x,s,\hat{\xi})\right]
\cdot(\xi-\hat{\xi}) \geq 0
\]
for a.e. $x\in\widehat{\Omega}$ and every $s\in\R$, 
$\xi,\hat{\xi}\in\R^n$.
\end{enumerate}
}
We also denote by $L^\infty_c(\widehat{\Omega})$ the set
of $v$'s in $L^\infty(\widehat{\Omega})$ vanishing a.e.
outside some compact subset of $\widehat{\Omega}$.
\begin{defn}
We say that 
$w\in W^{1,p}_{loc}(\widehat{\Omega})\cap 
L^\infty_{loc}(\widehat{\Omega})$ is a \emph{supersolution} 
(resp. \emph{subsolution}) of the equation
\begin{equation}
\label{eq:intl}
- \mathrm{div}\left[\hat{a}(x,u,\nabla u)\right]
+ \hat{b}(x,u,\nabla u) = 0
\qquad\text{in $\widehat{\Omega}$}\,,
\end{equation}
if 
\begin{multline*}
\int_{\widehat{\Omega}} 
\left[\hat{a}(x,w,\nabla w)\cdot\nabla v + 
\hat{b}(x,w,\nabla w) v\right]\,dx \geq 0
\qquad\text{(resp. $\leq 0$)} \\
\qquad\text{for every $v\in C^\infty_c(\widehat{\Omega})$ 
with $v\geq 0$}\,.
\end{multline*}
\end{defn}
\begin{thm} 
\label{thm:eqvareq} 
Let $\underline u, u, \overline u\in 
W^{1,p}_{loc}(\widehat{\Omega})\cap 
L^{\infty}_{loc}(\widehat{\Omega})$ be such that
$\underline{u}$ is a subsolution of~\eqref{eq:intl},
$\overline{u}$ is a supersolution of~\eqref{eq:intl}
and
\[
\begin{cases}
u\in \widehat{K}\,,\\
\noalign{\medskip}
\displaystyle{
\int_{\widehat{\Omega}}
\left[\hat{a}(x,u,\nabla u)\cdot\nabla (v-u) +
\hat{b}(x,u,\nabla u)\,(v-u)\right]\,dx \geq 0}\\
\noalign{\medskip}
~\qquad\qquad\qquad\qquad\qquad\qquad\qquad
\text{for every $v\in \widehat{K}$ 
with $(v-u)\in L^{\infty}_c(\widehat{\Omega})$}\,,
\end{cases}
\]
where
\[
\widehat{K} = \left\{v\in W^{1,p}_{loc}(\widehat{\Omega})
\cap L^{\infty}_{loc}(\widehat{\Omega}):\,\,
\text{$\underline{u}\leq v \leq \overline{u}$ a.e. in 
$\widehat{\Omega}$}
\right\}\,.
\]
Suppose also that:
\begin{enumerate}
\item[$(iii)$]
for every compact subset $C$ of $\widehat{\Omega}$, there 
exist $r_C>0$ and $\gamma_C\in L^{p'}(C)$ such that
\begin{alignat*}{3}
&\left|\hat{a}(x,s,\nabla\underline u(x))
- \hat{a}(x,\underline u(x),\nabla\underline u(x))\right|
&&\leq \gamma_C(x)(s - \underline u(x))\,,\\
&\left|\hat{a}(x,\overline u(x),\nabla\overline u(x))
- \hat{a}(x,\sigma,\nabla\overline u(x))\right|
&&\leq \gamma_C(x)(\overline u(x)-\sigma) \,,
\end{alignat*}
for a.e. $x\in C$ and every $s,\sigma\in\R$ with
$\underline u(x)\leq s \leq \underline u(x) + r_C$ and 
$\overline u(x) - r_C \leq \sigma \leq \overline u(x)$.
\end{enumerate}
\par
Then we have 
\[
\int_{\widehat{\Omega}} 
\left[\hat{a}(x,u,\nabla u)\cdot\nabla v + 
\hat{b}(x,u,\nabla u) v\right]\,dx = 0
\qquad\text{for every 
$v\in W^{1,p}_0(\widehat{\Omega})
\cap L^{\infty}_c(\widehat{\Omega})$} \,.
\]
\end{thm}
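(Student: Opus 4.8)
The plan is to run the classical ``natural constraint'' device (going back to~\cite{deuel_hess1976}): fix $v\in W^{1,p}_0(\widehat{\Omega})\cap L^{\infty}_c(\widehat{\Omega})$ and test the variational inequality with the projection of $u+\varepsilon v$ onto $\widehat{K}$. For $\varepsilon>0$ put
\[
w_\varepsilon=\min\bigl\{\overline u,\max\{\underline u,\,u+\varepsilon v\}\bigr\}
=u+\varepsilon v-\phi_\varepsilon+\psi_\varepsilon\,,\qquad
\phi_\varepsilon=(u+\varepsilon v-\overline u)^+\,,\quad \psi_\varepsilon=(\underline u-u-\varepsilon v)^+\,.
\]
Then $\underline u\leq w_\varepsilon\leq\overline u$ a.e.; moreover, outside $\mathrm{supp}\,v$ one has $u+\varepsilon v-\overline u=u-\overline u\leq0$ and $\underline u-u-\varepsilon v=\underline u-u\leq0$, so $w_\varepsilon-u$ vanishes off a compact subset of $\widehat{\Omega}$ and $w_\varepsilon$ is an admissible competitor. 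Writing $\Phi(\varepsilon)=\int_{\widehat{\Omega}}[\hat a(x,u,\nabla u)\cdot\nabla\phi_\varepsilon+\hat b(x,u,\nabla u)\,\phi_\varepsilon]\,dx$ and $\Psi(\varepsilon)$ for the same with $\psi_\varepsilon$, testing with $w_\varepsilon$ and dividing by $\varepsilon$ gives
\[
\int_{\widehat{\Omega}}\bigl[\hat a(x,u,\nabla u)\cdot\nabla v+\hat b(x,u,\nabla u)\,v\bigr]\,dx
\;\geq\;\frac1\varepsilon\,\Phi(\varepsilon)-\frac1\varepsilon\,\Psi(\varepsilon)\,.
\]
Hence it suffices to prove $\liminf_{\varepsilon\to0^+}\tfrac1\varepsilon\Phi(\varepsilon)\geq0$ and $\limsup_{\varepsilon\to0^+}\tfrac1\varepsilon\Psi(\varepsilon)\leq0$: the displayed left-hand side is then $\geq0$, and replacing $v$ by $-v$ gives $\leq0$, i.e.\ the asserted equality.

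To estimate $\Phi(\varepsilon)$, observe that $\phi_\varepsilon\geq0$ lies in $W^{1,p}_0(\widehat{\Omega})\cap L^{\infty}_c(\widehat{\Omega})$ (being locally $W^{1,p}$, bounded by $\varepsilon\|v\|_\infty$, and supported in $\mathrm{supp}\,v$), so a standard mollification argument based on $(i)$ lets us test the supersolution inequality for $\overline u$ with $\phi_\varepsilon$. Subtracting it from $\Phi(\varepsilon)$ and using $\phi_\varepsilon=u+\varepsilon v-\overline u$, $\nabla\phi_\varepsilon=\nabla u+\varepsilon\nabla v-\nabla\overline u$ on $E_\varepsilon:=\{u+\varepsilon v>\overline u\}$ (both vanishing elsewhere), I would split the resulting integrand on $E_\varepsilon$ into four pieces: $(\hat a(x,u,\nabla u)-\hat a(x,u,\nabla\overline u))\cdot(\nabla u-\nabla\overline u)\geq0$ by $(ii)$, which is discarded; $(\hat a(x,u,\nabla\overline u)-\hat a(x,\overline u,\nabla\overline u))\cdot(\nabla u-\nabla\overline u)\geq-\gamma_C(x)(\overline u-u)\,|\nabla u-\nabla\overline u|\geq-\varepsilon\|v\|_\infty\,\gamma_C(x)\,|\nabla u-\nabla\overline u|$ by $(iii)$, which applies on $E_\varepsilon$ for $\varepsilon$ small since $0\leq\overline u-u<\varepsilon v\leq\varepsilon\|v\|_\infty$ there; $\varepsilon(\hat a(x,u,\nabla u)-\hat a(x,\overline u,\nabla\overline u))\cdot\nabla v\geq-\varepsilon|\hat a(x,u,\nabla u)-\hat a(x,\overline u,\nabla\overline u)|\,|\nabla v|$; and $(\hat b(x,u,\nabla u)-\hat b(x,\overline u,\nabla\overline u))(u+\varepsilon v-\overline u)\geq-\varepsilon\|v\|_\infty|\hat b(x,u,\nabla u)-\hat b(x,\overline u,\nabla\overline u)|$ (using $0<u+\varepsilon v-\overline u\leq\varepsilon\|v\|_\infty$). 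Dividing by $\varepsilon$,
\begin{multline*}
\frac1\varepsilon\,\Phi(\varepsilon)\;\geq\;
-\|v\|_\infty\int_{E_\varepsilon}\gamma_C\,|\nabla u-\nabla\overline u|\,dx
-\int_{E_\varepsilon}|\hat a(x,u,\nabla u)-\hat a(x,\overline u,\nabla\overline u)|\,|\nabla v|\,dx\\
{}-\|v\|_\infty\int_{E_\varepsilon}|\hat b(x,u,\nabla u)-\hat b(x,\overline u,\nabla\overline u)|\,dx\,.
\end{multline*}

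Now each of the three functions occurring here lies in $L^1(\mathrm{supp}\,v)$ (by $(i)$ for the $\hat a$- and $\hat b$-differences, and since $\gamma_C\in L^{p'}$ while $|\nabla u-\nabla\overline u|\in L^p$ on $\mathrm{supp}\,v$), while $E_\varepsilon$ is nonincreasing as $\varepsilon\downarrow0$ and $\bigcap_{\varepsilon>0}E_\varepsilon\subseteq\{u=\overline u\}$, on which $\nabla u=\nabla\overline u$ a.e.; hence all three functions vanish a.e.\ on that limit set, and dominated convergence forces the three integrals to $0$. This gives $\liminf_{\varepsilon\to0^+}\tfrac1\varepsilon\Phi(\varepsilon)\geq0$. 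The estimate for $\Psi(\varepsilon)$ is symmetric, with the obvious sign change: test the subsolution inequality for $\underline u$ with $\psi_\varepsilon$, use $(ii)$, apply the first bound in $(iii)$ on $F_\varepsilon:=\{u+\varepsilon v<\underline u\}$ where $0\leq u-\underline u<-\varepsilon v\leq\varepsilon\|v\|_\infty$, and note that $F_\varepsilon$ shrinks to a set contained in $\{u=\underline u\}$, where $\nabla u=\nabla\underline u$ a.e.; this yields $\limsup_{\varepsilon\to0^+}\tfrac1\varepsilon\Psi(\varepsilon)\leq0$, completing the proof.

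The step I expect to be the main obstacle is the control, after dividing by $\varepsilon$, of the ``coupling'' term $\int_{E_\varepsilon}(\hat a(x,u,\nabla\overline u)-\hat a(x,\overline u,\nabla\overline u))\cdot(\nabla u-\nabla\overline u)\,dx$: it must be shown to be $o(\varepsilon)$, whereas the bare Carath\'eodory continuity of $\hat a$ in $s$ only yields $o(1)$. Hypothesis $(iii)$ is exactly what supplies the missing order: it turns the factor $\overline u-u$, which is $\leq\varepsilon\|v\|_\infty$ on $E_\varepsilon$, into an honest multiple of $\varepsilon$, after which the fact that $\nabla u=\nabla\overline u$ a.e.\ on $\{u=\overline u\}$ makes the $L^1$-mass of $\gamma_C\,|\nabla u-\nabla\overline u|$ carried by the shrinking sets $E_\varepsilon$ tend to $0$, closing the estimate. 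The remaining ingredients — the admissibility of $w_\varepsilon$, the density argument extending the sub/supersolution inequalities to one-signed test functions in $W^{1,p}_0(\widehat{\Omega})\cap L^{\infty}_c(\widehat{\Omega})$, and the dominated-convergence passages over $E_\varepsilon$ and $F_\varepsilon$ — are routine and I would merely indicate them.
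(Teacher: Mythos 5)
Your proposal is correct and follows essentially the same strategy as the paper's proof: project $u+\varepsilon v$ onto $\widehat K$, test the variational inequality with the projection, apply the super/sub-solution inequality to the error term, use (ii) to drop the monotone piece, and invoke (iii) to get the extra factor of $\varepsilon$ in the remaining piece, whose $L^1$-mass over the shrinking sets $E_\varepsilon$, $F_\varepsilon$ vanishes because $\nabla u=\nabla\overline u$ a.e.\ on $\{u=\overline u\}$ (and likewise for $\underline u$). The only organizational difference is that you use the two-sided projection $\min\{\overline u,\max\{\underline u,u+\varepsilon v\}\}$ for an arbitrary $v\in W^{1,p}_0(\widehat\Omega)\cap L^\infty_c(\widehat\Omega)$ in one pass, whereas the paper handles $v\geq 0$ (one-sided truncation $\min\{u+tv,\overline u\}$) and $v\leq 0$ in two separate steps before combining and approximating; this is a cosmetic reorganization, not a different method.
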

\begin{proof}
Let $v\in C^{\infty}_c(\widehat{\Omega})$ with $v\geq 0$, 
let $t>0$ and let
\[
u_t = \min\left\{ u+tv,\overline u\right\}\,.
\]
Clearly $u_t\in \widehat{K}$ and 
$(u_t-u)\in L^{\infty}_c(\widehat{\Omega})$, whence
\[
\int_{\widehat{\Omega}}
\left[\hat{a}(x,u,\nabla u)\cdot\nabla (u_t-u) +
\hat{b}(x,u,\nabla u)\,(u_t-u)\right]\,dx \geq 0 \,.
\]
Taking into account assumption~$(ii)$, we get
\[
\int_{\widehat{\Omega}}
\left[\hat{a}(x,u,\nabla u_t)\cdot\nabla (u_t-u) +
\hat{b}(x,u,\nabla u)\,(u_t-u)\right]\,dx \geq 0 \,,
\]
namely
\begin{multline*}
\int_{\widehat{\Omega}}
\left[\hat{a}(x,u_t,\nabla u_t)\cdot\nabla (u_t- u - tv) 
+ \hat{b}(x,u_t,\nabla u_t)\,(u_t- u - tv)\right]\,dx  \\
+ t\int_{\widehat{\Omega}} 
\left[\hat{a}(x,u_t,\nabla u_t)\cdot\nabla v
+ \hat{b}(x,u_t,\nabla u_t) v\right]\,dx 
\qquad\qquad\qquad\qquad~ \\
~\qquad\qquad
\geq \int_{\widehat{\Omega}}
\left[\hat{a}(x,u_t,\nabla u_t) - \hat{a}(x,u,\nabla u_t)\right]
\cdot\nabla (u_t-u-tv)\,dx \\ 
~\qquad\qquad\qquad\qquad
+ t \int_{\widehat{\Omega}}
\left[\hat{a}(x,u_t,\nabla u_t) - \hat{a}(x,u,\nabla u_t)\right]
\cdot\nabla v\,dx \\
+ \int_{\widehat{\Omega}}
\left[\hat{b}(x,u_t,\nabla u_t) - \hat{b}(x,u,\nabla u)\right]
(u_t-u)\,dx \,.
\end{multline*}
Since $u_t = \overline u$ where $u_t- u - tv\neq 0$, we have
\begin{multline*}
\int_{\widehat{\Omega}}
\left[\hat{a}(x,u_t,\nabla u_t)\cdot\nabla (u_t- u - tv) +
\hat{b}(x,u_t,\nabla u_t)\,(u_t-u - tv)\right]\,dx  \\
= \int_{\widehat{\Omega}}
\left[\hat{a}(x,\overline u,\nabla \overline u)
\cdot\nabla (u_t- u - tv) 
+ \hat{b}(x,\overline u,\nabla \overline u)\,
(u_t-u - tv)\right]\,dx \leq 0 \,,
\end{multline*}
as $\overline u$ is a supersolution of~\eqref{eq:intl} and
$u_t- u - tv \leq 0$.
That leads to the final inequality
\begin{multline*}
\int_{\widehat{\Omega}} 
\left[\hat{a}(x,u_t,\nabla u_t)\cdot\nabla v
+ \hat{b}(x,u_t,\nabla u_t) v\right]\,dx  \\
~\qquad\qquad
\geq \int_{\widehat{\Omega}}
\frac{\hat{a}(x,\overline u,\nabla \overline u) 
- \hat{a}(x,u,\nabla \overline u)}{t} 
\cdot\nabla (u_t-u-tv)\,dx \\ 
~\qquad\qquad\qquad\qquad
+  \int_{\widehat{\Omega}}
\left[\hat{a}(x,u_t,\nabla u_t) - \hat{a}(x,u,\nabla u_t)\right]
\cdot\nabla v\,dx \\
+ \int_{\widehat{\Omega}}
\left[\hat{b}(x,u_t,\nabla u_t) - \hat{b}(x,u,\nabla u)\right]
\frac{u_t-u}{t}\,dx \,.
\end{multline*}
Since $\left|\dfrac{u_t-u}{t}\right| \leq v$, from
assumption~$(i)$ it follows that
\begin{align*}
&\lim_{t\to 0^+}\,
\int_{\widehat{\Omega}} 
\left[\hat{a}(x,u_t,\nabla u_t)\cdot\nabla v
+ \hat{b}(x,u_t,\nabla u_t) v\right]\,dx \\
& \qquad\qquad\qquad\qquad\qquad
= \int_{\widehat{\Omega}} 
\left[\hat{a}(x,u,\nabla u)\cdot\nabla v
+ \hat{b}(x,u,\nabla u) v\right]\,dx \,,\\
&\lim_{t\to 0^+}\,
\int_{\widehat{\Omega}}
\left[\hat{a}(x,u_t,\nabla u_t) - \hat{a}(x,u,\nabla u_t)\right]
\cdot\nabla v\,dx = 0\,,\\
&\lim_{t\to 0^+}\,
\int_{\widehat{\Omega}}
\left[\hat{b}(x,u_t,\nabla u_t) - \hat{b}(x,u,\nabla u)\right]
\frac{u_t-u}{t}\,dx = 0\,.
\end{align*}
Let now $C$ be a compact subset of $\widehat{\Omega}$ such that
$v=0$ outside $C$ and let $r_C>0$ and $\gamma_C\in L^{p'}(C)$ 
be as in assumption~$(iii)$.
Without loss of generality, we may assume that
$tv \leq r_C$ on $C$.
Then, since $0 \leq \overline u - u < tv \leq r_C$ where 
$u_t- u - tv\neq 0$, we get
\[
\left|\frac{\hat{a}(x,\overline u,\nabla \overline u) 
- \hat{a}(x,u,\nabla \overline u)}{t} 
\cdot\nabla (u_t-u-tv)\right| \leq
\gamma_C v \left|\nabla (\overline u-u-tv)\right|\,.
\]
Again from assumption~$(i)$ we infer that
\[
\lim_{t\to 0^+}\,
\int_{\widehat{\Omega}}
\frac{\hat{a}(x,\overline u,\nabla \overline u) 
- \hat{a}(x,u,\nabla \overline u)}{t} 
\cdot\nabla (u_t-u-tv)\,dx = 0\,.
\]
Therefore we have
\[
\int_{\widehat{\Omega}} 
\left[\hat{a}(x,u,\nabla u)\cdot\nabla v
+ \hat{b}(x,u,\nabla u) v\right]\,dx \geq 0
\]
for every $v\in C^{\infty}_c(\widehat{\Omega})$ with $v\geq 0$.
\par
If $v\in C^{\infty}_c(\widehat{\Omega})$ with $v\leq 0$, 
we consider $t>0$ and
\[
u_t = \max\{ u+tv,\underline u\}\,.
\]
Arguing in a similar way, we get
\[
\int_{\widehat{\Omega}} 
\left[\hat{a}(x,u,\nabla u)\cdot\nabla v
+ \hat{b}(x,u,\nabla u) v\right]\,dx \geq 0
\]
for every $v\in C^{\infty}_c(\widehat{\Omega})$ with $v\leq 0$.
\par
It follows
\[
\int_{\widehat{\Omega}} 
\left[\hat{a}(x,u,\nabla u)\cdot\nabla v
+ \hat{b}(x,u,\nabla u) v\right]\,dx \geq 0
\]
for every $v\in C^{\infty}_c(\widehat{\Omega})$, whence
the equality, as we can swap $v$ with $-v$.
\par
The case  $v\in W^{1,p}_0(\widehat{\Omega})
\cap L^{\infty}_c(\widehat{\Omega})$ can be treated
by a standard approximation argument.
\end{proof}
\begin{rem}
Assumption~$(iii)$ is obviously satisfied in the
following cases:
\begin{enumerate}
\item[$(a)$]
the function $\hat{a}(x,s,\xi)$ is independent
of $s$;
\item[$(b)$]
we have $\hat{a}(x,s,0)=0$ and 
$\underline u, \overline u$ are constant.
\end{enumerate}
On the other hand, we do not know whether
Theorem~\ref{thm:eqvareq} holds true without
assumption~$(iii)$.
\end{rem}
%


\section{Proof of the results stated in the Introduction}
\label{sect:proofs}
\noindent
\emph{Proof of Theorem~\ref{thm:main}.}
\par\noindent
It is easily seen that 
\begin{alignat*}{3}
&\check a(x,s,\xi) &&= 
a(x, \min\{\max\{s,\underline u(x)\},\overline u(x)\},\xi) \,, \\
&\check b(x,s,\xi) &&= 
b(x, \min\{\max\{s,\underline u(x)\},\overline u(x)\},\xi) 
\end{alignat*}
are still two Carath\'eodory functions satisfying 
assumptions~\ref{hn} and~\ref{hm} and
$\underline u$, $\overline u$
are respectively a subsolution and a supersolution 
of
\[
- \mathrm{div}\left[\check a(x,u,\nabla u)\right]
+ \check b(x,u,\nabla u) = 0
\qquad\text{in $\Omega$} \,.
\]
On the other hand, if $u$ satisfies the assertion
with respect to $\check a$ and $\check b$, then it does 
the same with respect to $a$ and $b$, 
as $\underline{u} \leq u \leq \overline{u}$ a.e.
in $\Omega$.
Therefore, without loss of generality, we may assume that
$a(x,s,\xi)$ and $b(x,s,\xi)$ are independent of $s$ for
$s\leq \underline{u}(x)$ and for $s\geq \overline{u}(x)$.
\par
As in Section~\ref{sect:qevin}, for every $t\in[0,1]$ we set
\[
\underline u_t = \underline u - t\,, \qquad 
\overline u_t = \overline u + t \,.
\]
Then, assumptions~$(i)$ and~$(ii)$ of Section~\ref{sect:eqvareq}
are obviously satisfied and $\underline u_t$, $\overline u_t$
are respectively a subsolution and a supersolution 
of~\eqref{eq:int} for any $0\leq t\leq 1$.
If $t>0$, also the hypothesis~$(iii)$ of 
Theorem~\ref{thm:eqvareq} holds true, 
as $a(x,s,\xi)$ is independent of $s$ for 
$s \leq \underline u_t(x)+t$ and for $s\geq \overline u_t(x) - t$.
\par
From Theorem~\ref{thm:strongcompactness} we infer that 
for every $t\in[0,1]$ there exists a solution $u$
of~\eqref{eq:qevit} and that the set
\[
\left\{(u,t)\in\left(W^{1,p}_0(\Omega)\cap 
L^{\infty}(\Omega)\right) \times [0,1]:\,\,
\text{$u$ is a solution of~\eqref{eq:qevit}} \right\}
\]
is strongly compact in $W^{1,p}_0(\Omega)\times[0,1]$. 
\par
Let now, for every $m\geq 1$, $u_m$ be a solution 
of~\eqref{eq:qevit} with $t=1/m$. 
Then $(u_m)$ is bounded in $L^\infty(\Omega)$ and, up to a
subsequence, strongly convergent in $W^{1,p}_0(\Omega)$ to 
some $u$ satisfying~\eqref{eq:qevit} with $t=0$.
In particular, we have
$\underline{u} \leq u \leq \overline{u}$ a.e.
in $\Omega$.
\par
From Theorem~\ref{thm:eqvareq} we infer that
each $u_m$ actually satisfies
\[
\int_{\Omega} 
\left[a(x,u_m,\nabla u_m)\cdot\nabla v + 
b(x,u_m,\nabla u_m) v\right]\,dx = 0
\qquad\text{for every 
$v\in W^{1,p}_0(\Omega)\cap L^{\infty}_c(\Omega)$} \,.
\]
Going to the limit as $m\to\infty$, it easily follows that
\[
\int_{\Omega} 
\left[a(x,u,\nabla u)\cdot\nabla v + 
b(x,u,\nabla u) v\right]\,dx = 0
\qquad\text{for every 
$v\in W^{1,p}_0(\Omega)\cap L^{\infty}_c(\Omega)$} \,.
\]
By a standard density argument, the equation holds
for any $v\in W^{1,p}_0(\Omega)\cap L^\infty(\Omega)$.
\qed
\par\bigskip
\noindent
\emph{Proof of Corollary~\ref{cor:main}.}
\par\noindent
If $Q=0$, we have $f=0$ and the assertion is satisfied by $u=0$.
Assume that $Q>0$, so that $b_0(x)\geq 0$ a.e. in $\Omega$, and set
\[
b(x,s,\xi) = b_1(x)\cdot(|\xi|^{q-1}\xi)
+b_0(x)|s|^{r-1}s - f(x)\,.
\]
In the case $0 < q < p$ we have
\[
|b(x,s,\xi)| \leq
\frac{p-q}{p}\,|b_1(x)|^{\frac{p}{p-q}}
+ \frac{q}{p}\,|\xi|^p + R^r\,b_0(x) + |f(x)| \,,
\]
whenever $|s|\leq R$.
Therefore assumptions~\ref{hn} and~\ref{hm} are satisfied.
In the case $q=p$ we have $b_1\in L^\infty(\Omega;\R^n)$
and the argument is even simpler.
\par
On the other hand, if we set
\[
\underline{u}(x) = - Q^{1/r}\,,\qquad
\overline{u}(x) = Q^{1/r}\,,
\]
it follows that $a(x,\underline{u},\nabla\underline{u})=0$ and
\[
b(x,\underline{u},\nabla\underline{u}) =
- Q \, b_0 - f \leq - Q \, b_0 + Q\,b_0 =0\,,
\]
so that $\underline{u}$ is a subsolution
of~\eqref{eq:int}.
The proof that $\overline{u}$ is a supersolution
of~\eqref{eq:int} is similar.
\par
By Theorem~\ref{thm:main} the assertion follows.
\qed
\par\bigskip
\noindent
\emph{Proof of Corollary~\ref{cor:main}.}
\par\noindent
If we set
\[
\check{b}(x,s,\xi) = b(x,s,\xi) + b_0(x) g(s) - f(x)\,,
\]
it is easily seen that $a$ and $\check{b}$ still
satisfy assumptions~\ref{hn} and~\ref{hm}.
\par
Let $\underline A \leq 0 \leq \overline A$ be such that
\[
g(\underline A) \leq -Q\,,\qquad g(\overline A) \geq Q\,.
\]
Then, if we set
\[
\underline{u}(x) = \underline A\,,\qquad
\overline{u}(x) = \overline A\,,
\]
it follows that $a(x,\underline{u},\nabla\underline{u})=0$,
$b(x,\underline{u},\nabla\underline{u})=0$ and
\[
\check{b}(x,\underline{u},\nabla\underline{u}) =
b_0 g(\underline A) - f \leq - Q \, b_0 + Q\,b_0 =0\,,
\]
so that $\underline{u}$ is a subsolution of~\eqref{eq:int}
with $b$ replaced by $\check{b}$.
The proof that $\overline{u}$ is a supersolution
is similar.
\par
By Theorem~\ref{thm:main} the assertion follows.
\qed
%


%
\end{document}